\documentclass[12pt,draft]{amsart}

\usepackage{amsmath}
\usepackage{amssymb}
\usepackage{array}
\usepackage{amsthm}

\textwidth=15.6cm \oddsidemargin=0.5cm \evensidemargin=0.5cm
\textheight=22cm

\renewcommand{\atop}[2]{%
\genfrac{}{}{0pt}{}{#1}{#2}}

\newcommand{\cf}[2]{%
\genfrac{}{}{0pt}{}{#1}{#2}}

\newtheorem{theorem}{Theorem}
\newtheorem{lemma}{Lemma}
\newtheorem{corollary}{Corollary}
\newtheorem{proposition}{Proposition}

\theoremstyle{definition}


\renewcommand{\atop}[2]{%
\genfrac{}{}{0pt}{}{#1}{#2}}

\renewcommand{\cf}[2]{%
\genfrac{}{}{0pt}{}{#1}{#2}}

\pagestyle{plain}

\begin{document}

\title{On a continued fraction expansion for Euler's constant.}

\author{Kh.~Hessami Pilehrood}

\author{T.~Hessami Pilehrood}


\address{\begin{flushleft} School of Mathematics, Institute for Research  in Fundamental Sciences
(IPM), P.O.Box 19395-5746, Tehran, Iran \end{flushleft}}

\email{hessamik@gmail.com,  hessamit@gmail.com}
\noindent\curraddr{
Department of Mathematics and Statistics, Dalhousie University, Halifax, Nova Scotia,
B3H 3J5, Canada
}


\date{}

\keywords{Euler constant, Euler-Gompertz constant, Meijer $G$-function, rational approximation,
second-order linear recurrence, continued fraction, Zeilberger's algorithm of creative telescoping,
Whittaker function, Laguerre orthogonal polynomials}

\begin{abstract}
Recently, A.~I.~Aptekarev and his collaborators found a sequence of rational approximations to
Euler's constant $\gamma$ defined by a third-order homogeneous linear recurrence.
In this paper, we give a new interpretation of Aptekarev's approximations
in terms of Meijer $G$-functions and hypergeometric-type series.
This approach allows us to describe a very general construction giving linear forms in $1$ and $\gamma$
with rational coefficients. Using this construction we find new rational approximations to $\gamma$
generated by a second-order inhomogeneous linear recurrence with polynomial coefficients. This leads
to a continued fraction (though not a simple continued fraction) for Euler's constant.
It seems to be the first non-trivial continued fraction expansion convergent to Euler's constant sub-exponentially,
 the elements  of which can be expressed as a general pattern. It is interesting to note that the same
 homogeneous recurrence generates a continued fraction for the Euler-Gompertz constant found by Stieltjes
 in 1895.
\end{abstract}

\maketitle

\section{Introduction}

In 1978,
 R.~Ap\'ery \cite{ap,po} stunned the mathematical world with a
proof that
$\zeta(3)=\sum_{n=1}^{\infty}\frac{1}{n^3}$ is irrational. Since then many
different proofs of this fact have appeared in the literature (see \cite{fi} and the references given there).
The main idea of all the known proofs is essentially the same and consists in constructing
a sequence of linear forms $I_n=u_n\zeta(3)-v_n,$ $n=0,1,2,\ldots,$ which satisfy
the following conditions:
$$
\limsup_{n\to\infty} |I_n|^{1/n}\le (\sqrt{2}-1)^4=0,0294372\ldots,
$$
$I_n\ne 0$ for infinitely many $n,$ and $u_n\in{\mathbb Z},$ $2D_n^3v_n\in {\mathbb Z},$
where $D_n$ is the least common multiple of the numbers $1,2,\ldots,n.$
If we suppose that $\zeta(3)$ were a rational number $a/b,$ then $2bD_n^3I_n$ is a non-zero integer
for infinitely many $n$ and, on the other hand, it tends to zero as $n\to\infty$ (since
$D_n^{1/n}\to e$ and $e^3(\sqrt{2}-1)^4=0.591\ldots<1$), which is a contradiction.

The diversity of all the proposed proofs of the irrationality of $\zeta(3)$
is presented by different interpretations and ways of obtaining the linear forms $I_n$
and its various representations.
Ap\'ery showed that the sequences $u_n$ and $v_n$ are given by the formulae
\begin{equation*}
u_n=\sum_{k=0}^n\binom{n}{k}^2\binom{n+k}{k}^2, \quad
v_n=\sum_{k=0}^n\binom{n}{k}^2\binom{n+k}{k}^2\left(\sum_{m=1}^n\frac{1}{m^3}+
\sum_{m=1}^k\frac{(-1)^{m-1}}{2m^3\binom{n}{m}\binom{n+m}{m}}\right)
\end{equation*}
and satisfy the second-order linear recurrence relation
$$
(n+1)^3y_{n+1}-(34n^3+51n^2+27n+5)y_n+n^3y_{n-1}=0
$$
with the initial conditions $u_0=1, u_1=5,$ $v_0=0,$ $v_1=6.$
This implies immediately that $v_n/u_n$ is the $n$th convergent
of the following continued fraction:
$$
\zeta(3)=
\frac{6}{5}\cf{}{-}
\frac{1}{117}\cf{}{-}\frac{64}{535}\cf{}{-}
\ldots\cf{}{-}\frac{n^6}{34n^3+51n^2+27n+5}\cf{}{-}\ldots.
$$
The shorter proof of Ap\'ery's theorem  has been found in 1979 by F.~Beukers \cite{be},
who used multiple Euler-type integrals and Legendre polynomials.

In 1996 inspired by the works of F.~Beukers \cite{be1} and L.~Gutnik \cite{gu},
Yu.~V.~Nesterenko \cite{ne}
proposed another proof of the irrationality of $\zeta(3)$ and a new expansion
of this number into continued fraction. His proof was based on
the hypergeometric-type series
\begin{equation}
-\frac{1}{2}\sum_{k=1}^{\infty}R_n'(k)=-\frac{1}{2}\sum_{k=1}^{\infty}\frac{d}{dt}
\left.\frac{\Gamma^4(t)}{\Gamma^2(t-n)\Gamma^2(t+n+1)}\right|_{t=k}=u_n\zeta(3)-v_n
\label{eq01}
\end{equation}
that can be written (by the residue theorem) as a complex integral or a Meijer $G$-function
(see \cite[Section 5.2]{lu}, for definition)
\begin{equation}
\begin{split}
-\sum_{k=1}^{\infty}R_n'(k)&=\frac{1}{2\pi i}\int\limits_{c-i\infty}^{c+i\infty}
R_n(s)\left(\frac{\pi}{\sin\pi s}\right)^2\,ds=\frac{1}{2\pi i}\int\limits_{c-i\infty}^{c+i\infty}
\frac{\Gamma^2(n+1-s)\Gamma^4(s)}{\Gamma^2(n+1+s)}\,ds\\
&=G_{4,4}^{4,2}\left(
\left.\atop{-n, -n,
n+1, n+1}{0, \, 0,\, 0,\,0}\right|1
\right),
\end{split}
\label{eq02}
\end{equation}
here $c$ is an arbitrary real number satisfying $0<c<n+1$ and $R_n(t)$ is a rational function
defined by
$$
R_n(t)=\frac{(t-1)^2\cdots(t-n)^2}{t^2(t+1)^2\cdots(t+n)^2}.
$$
 As we can see now,
 the hypergeometric construction
 (\ref{eq01}), (\ref{eq02}) appeared to be more transparent for generalizations
 on obtaining irrationality results for other odd zeta values (see \cite{ri}, \cite{zu}).

 Euler's constant was first introduced by Leonhard Euler in 1734 as
 $$
 \gamma=\lim_{n\to\infty}\left(1+\frac{1}{2}+\frac{1}{3}+\cdots+
 \frac{1}{n}-\log n\right)=0.57721566490153286\ldots.
 $$
 It can be considered as an analogue of the value ''$\zeta(1)$`` of Riemann's zeta
 function if we compensate the partial sums of the divergent harmonic series
 by the natural logarithm. It is not known if $\gamma$ is an irrational or transcendental
 number. The question of its irrationality
 remains a famous unresolved problem in the theory of numbers. Even obtaining good rational approximations
 to it was unknown until recently. First such approximations defined by a third-order linear
 recurrence were found by A.~I.~Aptekarev and his collaborators \cite{apt} in 2007. More precisely,
 the numerators $\tilde{p}_n$ and denominators~$\tilde{q}_n$ of these approximations are positive integers
 generated by the  recurrence relation
 \begin{equation}
 \begin{split}
(16n-15)y_{n+1}&=(128n^3+40n^2-82n-45)y_n \\
&-n^2(256n^3-240n^2+64n-7)y_{n-1}
+n^2(n-1)^2(16n+1)y_{n-2}
\label{eq04}
\end{split}
\end{equation}
with the initial conditions
\begin{equation*}
\begin{array}{ccc}
\tilde{p}_0=0, \qquad & \qquad \tilde{p}_1=2, \qquad & \qquad \tilde{p}_2=31, \\
\tilde{q}_0=1,  \qquad &  \qquad \tilde{q}_1=3, \qquad &  \qquad \tilde{q}_2=50
\end{array}
\end{equation*}
and  the  asymptotics
\begin{align}
\tilde{q}_n&=(2n)!\frac{e^{\sqrt{2n}}}{\sqrt[4]{n}}\left(\frac{1}{\sqrt{\pi}(4e)^{3/8}}+O(n^{-1/2})\right),\nonumber\\[3pt]
\tilde{p}_n-\gamma \tilde{q}_n&=(2n)!\frac{e^{-\sqrt{2n}}}{\sqrt[4]{n}}\left(\frac{2\sqrt{\pi}}{(4e)^{3/8}}+O(n^{-1/2})\right).
\label{eq05}
\end{align}
The remainder of the above approximations is given by the integral \cite{at}
\begin{equation}
\int_0^{\infty}Q_n(x)e^{-x}\log(x)\,dx=\tilde{p}_n-\gamma \tilde{q}_n,
\label{eq06}
\end{equation}
where
$$
Q_n(x)=\frac{1}{n!^2}\frac{e^x}{1-x}(x^n(x^n(1-x)^{2n+1}e^{-x})^{(n)})^{(n)}
$$
is a multiple Jacobi-Laguerre orthogonal polynomial
on $[0,1]$
and $[1,+\infty)$
with respect to the two weight functions
$w_1(x)=(1-x)e^{-x}, \, w_2(x)=(1-x)\log(x)e^{-x}.$
The integral  (\ref{eq06}) can also be written as a multiple integral (see \cite[Lemma 4]{he})
\begin{equation}
\int_0^{\infty}Q_n(x)e^{-x}\log(x)\,dx=\int_0^{\infty}\int_0^{\infty}
\frac{x^ny^n(x-1)^{2n+1}e^{-x}}{(xy+1)^{n+1}(y+1)^{n+1}}\,dxdy.
\label{eq07}
\end{equation}
The  integrality of the sequences $\tilde{p}_n$ and $\tilde{q}_n$  is not evident and can not be deduced
directly from the recurrence equation (\ref{eq04}).
Tulyakov \cite{tu} proved independently that $\tilde{p}_n$ and $\tilde{q}_n$ are integers,
by considering a more ''dense`` sequence of rational approximations to $\gamma.$
The present authors (see \cite{he1}) found explicit representations for $\tilde{p}_n$ and $\tilde{q}_n:$
\begin{equation}
\tilde{q}_n=\sum_{k=0}^n\binom{n}{k}^2(n+k)!, \qquad
\tilde{p}_n=\sum_{k=0}^n\binom{n}{k}^2(n+k)!(H_{n+k}+2H_{n-k}-2H_k),
\label{eq08}
\end{equation}
where $H_n=\sum_{k=1}^n1/k$ is the $n$th harmonic number and $H_0:=0.$
Formulae (\ref{eq08}) imply that $\tilde{q}_n$ and $\tilde{p}_n$ are integers divisible by $n!$
and $\frac{n!}{D_n},$ respectively. Although the coefficients of the linear forms
(\ref{eq05}) can be canceled out by the large common factor $\frac{n!}{D_n},$
it is still not enough to prove the irrationality of $\gamma,$
since the linear $\gamma$-forms with integer coefficients:
$$
\frac{\tilde{p}_nD_n}{n!}-\frac{\tilde{q}_nD_n}{n!}\,\gamma
\in{\mathbb Z}+{\mathbb Z}\gamma
$$
do not tend to zero as $n$ tends to infinity:
$$
\frac{\tilde{p}_nD_n}{n!}-\frac{\tilde{q}_nD_n}{n!}\,\gamma=O(4^nn^{n-1/4}e^{-\sqrt{2n}}).
$$
 Nevertheless,
the sequence $\tilde{p}_n/\tilde{q}_n$ provides good rational approximations to Euler's constant
$$
\frac{\tilde{p}_n}{\tilde{q}_n}-\gamma=2\pi e^{-2\sqrt{2n}}(1+O(n^{-1/2})) \qquad\mbox{as}
\quad n\to\infty.
$$
In 2009, T.~Rivoal \cite{ri1} found another way of rationally  approximating
the Euler constant $\gamma,$ by using multiple Laguerre polynomials
$$
A_n(x)=\frac{1}{n!^2}e^x(x^n(x^ne^{-x})^{(n)})^{(n)}.
$$
His construction is based on the following third-order recurrence:
\begin{equation*}
\begin{split}
(n&+3)^2(8n+11)(8n+19)y_{n+3}=(n+3)(8n+11)(24n^2+145n+215)y_{n+2}\\
&-(8n+27)(24n^3+105n^2+124n+25)y_{n+1}+(n+2)^2(8n+19)(8n+27)y_n,
\end{split}
\end{equation*}
which provides two sequences of rational numbers $P_n$ and
$Q_n,$ $n\ge 0,$ with the initial values
\begin{equation*}
\begin{array}{lcc}
P_0=-1, \qquad & \qquad P_1=4, \qquad & \qquad P_2=77/4, \\
Q_0=1,  \qquad &  \qquad Q_1=7, \qquad &  \qquad Q_2=65/2
\end{array}
\end{equation*}
such that $\frac{P_n}{Q_n}$ converges to $\gamma.$
The sequences $P_n,$ $Q_n$ satisfy the inclusions
$$
n!\,Q_n, \quad n!\,D_n P_n\in {\mathbb Z},
$$
which were proved in  \cite[Corollary 5]{he},
and provide better approximations to $\gamma,$
$$
\left|\frac{P_n}{Q_n}-\gamma\right|\le c_0e^{-9/2n^{2/3}+3/2n^{1/3}},
\quad |Q_n|=O(e^{3n^{2/3}-n^{1/3}}) \quad\mbox{as}\quad n\to\infty.
$$
Unfortunately, this convergence
is not fast enough to imply the irrationality of $\gamma.$

In this paper, we give a new interpretation of Aptekarev's approximations to $\gamma$
in terms of Meijer $G$-functions and hypergeometric-type series, which can be considered as an analog
of the complex integral  (\ref{eq02}) and series (\ref{eq01}) for Euler's constant.
This approach allows us to describe a very general construction giving linear forms in $1$ and $\gamma$
with rational coefficients. Using this construction we find new rational approximations to $\gamma$
generated by a second-order inhomogeneous linear recurrence with polynomial coefficients. This leads
to a continued fraction (though not a simple continued fraction) for Euler's constant.
It seems to be the first non-trivial continued fraction expansion convergent to Euler's constant sub-exponentially,
 the elements  of which can be expressed as a general pattern. It is interesting to note that the same
 homogeneous second-order linear recurrence generates a continued fraction for the Euler-Gompertz constant
 found by Stieltjes in 1895.

\section{Analogs of hypergeometric-type series and complex integrals for Euler's constant}

Note that the first attempt to generalize series (\ref{eq01}) to find suitable approximations
for Euler's constant $\gamma$ was made by Sondow \cite{sondow}. He introduced the following series:
$$
I_n:=\sum_{\nu=n+1}^{\infty}\int_{\nu}^{\infty}\left(\frac{n!}{x(x+1)\cdots (x+n)}\right)^2\,dx
$$
and proved that
\begin{equation}
I_n=\binom{2n}{n}\gamma+L_n
-\sum_{i=0}^n\binom{n}{i}^2H_{n+i}=O(2^{-4n}n^{-1}),
\label{sond}
\end{equation}
where
$$
L_n=2\sum\limits_{k=1}^n\sum\limits_{i=0}^{k-1}\binom{n}{i}^2(H_{n-i}-H_i)\log(n+k).
$$
Unfortunately, this type of approximations  contains linear forms in logarithms of rational numbers and this enables
only to obtain conditional irrationality criteria for $\gamma$ that require knowledge of the growth
of the fractional parts $\{D_{2n}L_n\}.$

\vspace{0.1cm}

In this section, we obtain new representations for Aptekarev's linear form $\tilde{p}_n-\gamma \tilde{q}_n$
distinct from (\ref{eq06}) and (\ref{eq07}) which can be considered as analogs of the hypergeometric-type series
(\ref{eq01}) and complex integral (\ref{eq02}). It can be easily done by using explicit formulae
(\ref{eq08}).

\begin{proposition} \label{p1}
For each $n=0,1,2,\ldots,$ the following equality holds:
$$
\tilde{f}_n:=\tilde{p}_n-\gamma \tilde{q}_n=n!^2\sum_{k=0}^n\frac{d}{dt}\left.\left(\frac{\Gamma(n+t+1)}{\Gamma^2(t+1)
\Gamma^2(n-t+1)}\right)\right|_{t=k}.
$$
\end{proposition}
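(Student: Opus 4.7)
The plan is to verify the identity by direct differentiation, using the explicit formulas \eqref{eq08} already established in \cite{he1} for $\tilde q_n$ and $\tilde p_n$. Set
$$
F(t)=\frac{\Gamma(n+t+1)}{\Gamma^{2}(t+1)\,\Gamma^{2}(n-t+1)}.
$$
The first step is to evaluate $F$ at the non-negative integer $t=k$ with $0\le k\le n$. Since every gamma factor in the denominator is finite and nonzero there, one gets $F(k)=(n+k)!/(k!^{2}(n-k)!^{2})$, so that $n!^{2}F(k)=\binom{n}{k}^{2}(n+k)!$. Summing over $k$ and comparing with \eqref{eq08} gives $n!^{2}\sum_{k=0}^{n}F(k)=\tilde q_n$, which will be used to absorb the $-\gamma\tilde q_n$ piece.

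The second step is to compute the logarithmic derivative
$$
\frac{F'(t)}{F(t)}=\psi(n+t+1)-2\psi(t+1)+2\psi(n-t+1),
$$
where $\psi=\Gamma'/\Gamma$ is the digamma function. At $t=k$ with $0\le k\le n$, all three arguments are positive integers, so one may substitute $\psi(m+1)=-\gamma+H_m$. A short arithmetic check shows the three copies of $-\gamma$ combine with coefficients $(1,-2,2)$ to leave an overall $-\gamma$, and the harmonic-number contributions assemble exactly into $H_{n+k}+2H_{n-k}-2H_k$. Hence
$$
F'(k)=F(k)\bigl(-\gamma+H_{n+k}+2H_{n-k}-2H_k\bigr).
$$

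The third step is just to multiply by $n!^{2}$ and sum over $k=0,\dots,n$. Splitting the $-\gamma$ term from the harmonic-number terms yields
$$
n!^{2}\sum_{k=0}^{n}F'(k)=-\gamma\sum_{k=0}^{n}\binom{n}{k}^{2}(n+k)!+\sum_{k=0}^{n}\binom{n}{k}^{2}(n+k)!\bigl(H_{n+k}+2H_{n-k}-2H_k\bigr),
$$
and both sums are recognized as $\tilde q_n$ and $\tilde p_n$ respectively via \eqref{eq08}. This gives exactly $\tilde p_n-\gamma\tilde q_n=\tilde f_n$, which is the required identity.

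There is no real obstacle: the proof is a one-line substitution once \eqref{eq08} is available. The only point to verify carefully is that the coefficient pattern $(1,-2,2)$ in the digamma combination produces precisely $-\gamma$ (not $-3\gamma$ or $\gamma$) together with precisely the harmonic-number combination appearing in $\tilde p_n$; this is exactly the arithmetical match that makes the formulation of the proposition correct, and is the only thing to be double-checked before writing down the result.
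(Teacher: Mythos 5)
Your proposal is correct and follows essentially the same route as the paper: differentiate $F(t)$ to get the digamma combination $\psi(n+t+1)-2\psi(t+1)+2\psi(n-t+1)$, evaluate at integer points via $\psi(m+1)=H_m-\gamma$, and match the resulting sums against the explicit formulas (\ref{eq08}) for $\tilde p_n$ and $\tilde q_n$. The coefficient check $(1,-2,2)\cdot(-\gamma)=-\gamma$ that you flag is exactly the computation the paper performs.
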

\begin{proof}
The straightforward verification shows that
\begin{equation}
\begin{split}
\frac{d}{dt}\left(\frac{\Gamma(n+t+1)}{\Gamma^2(t+1)
\Gamma^2(n-t+1)}\right)&=\frac{\Gamma(n+t+1)}{\Gamma^2(t+1)
\Gamma^2(n-t+1)} \\ &\times\left(\psi(n+t+1)-2\psi(t+1)+2\psi(n-t+1)\right),
\end{split}
\label{eq09}
\end{equation}
where $\psi(z)=\frac{\Gamma'(z)}{\Gamma(z)}$ is the logarithmic derivative of the gamma function,
also known as the digamma function. Summing (\ref{eq09}) for $t=0,1,\ldots,n$ and using the
well-known properties of the digamma function
$$
\psi(1)=-\gamma, \qquad \psi(n+1)=H_n-\gamma, \quad n\ge 1,
$$
we have
\begin{equation*}
\begin{split}
&n!^2\sum_{k=0}^n\frac{d}{dt}\left.\left(\frac{\Gamma(n+t+1)}{\Gamma^2(t+1)
\Gamma^2(n-t+1)}\right)\right|_{t=k} \\ &=
n!^2\sum_{k=0}^n\frac{\Gamma(n+k+1)}{\Gamma^2(k+1)\Gamma^2(n-k+1)}
\left(\psi(n+k+1)-2\psi(k+1)+2\psi(n-k+1)\right) \\
&=\sum_{k=0}^n\binom{n}{k}^2(n+k)!(H_{n+k}-2H_k+2H_{n-k}-\gamma)
=\tilde{p}_n-\gamma \tilde{q}_n,
\end{split}
\end{equation*}
as required.
\end{proof}
\begin{proposition} \label{p2}
For each $n=0,1,2,\ldots,$ we have
\begin{equation}
\begin{split}
\tilde{f}_n&+\frac{n!^4}{(2n+1)!^2}\,{}\sb 2F\sb{2}
\left(
\left.\atop{
n+1, n+1}{2n+2, 2n+2}\right|-1
\right)\\[3pt]
&=\frac{n!^2}{2\pi i}\int_{c-i\infty}^{c+i\infty}\frac{\Gamma(n+t+1)}{\Gamma^2(t+1)
\Gamma^2(n-t+1)}\left(\frac{\pi}{\sin\pi t}\right)^2dt \\[5pt]
&=\frac{n!^2}{2\pi i}\int_{c-i\infty}^{c+i\infty}\frac{\Gamma(n+t+1)\Gamma^2(t-n)}{\Gamma^2(t+1)}\,dt
=n!^2 G_{3,2}^{0,3}\left(
\left.\atop{-n,
n+1, n+1}{0, \,  0}\right|1
\right),
\label{eq10}
\end{split}
\end{equation}
\begin{equation}
\begin{split}
\tilde{q}_n&=\frac{n!^2}{2\pi i}\int_L\frac{\Gamma(n+t+1)\,e^{i\pi t}}%
{\Gamma^2(t+1)\Gamma^2(n-t+1)}\cdot\frac{\pi}{\sin\pi t}\,dt \\[5pt]
&=\frac{(-1)^n n!^2}{2\pi i}\int_L\frac{\Gamma(n+t+1)\Gamma(t-n)}%
{\Gamma^2(t+1)\Gamma(n-t+1)}\,e^{i\pi t}\,dt \\[5pt]
&=(-1)^n n!^2 G_{3,2}^{0,2}\left(
\left.\atop{-n,
n+1, n+1}{0, \,  0}\right|-1
\right),
\label{eq10.5}
\end{split}
\end{equation}
where ${}\sb rF\sb{s}$ is the generalized hypergeometric function,
$c$ is an arbitrary real number satisfying $c>n,$ and $L$ is a loop beginning and
ending at $-\infty$ and encircling the points $n,n-1,n-2, \ldots$ exactly once in the
positive direction.
\end{proposition}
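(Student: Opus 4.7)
My plan is to reduce both identities (\ref{eq10}) and (\ref{eq10.5}) to residue computations and then match the resulting residue sums with Proposition~\ref{p1} (for $\tilde{f}_n$) and with the closed formula (\ref{eq08}) (for $\tilde{q}_n$). The algebraic bridge between the three forms of each integrand will be the reflection identity
$$
\frac{\pi}{\sin\pi t}=(-1)^n\,\Gamma(t-n)\,\Gamma(n+1-t),
$$
which follows from $\Gamma(z)\Gamma(1-z)=\pi/\sin\pi z$ applied to $z=t-n$ together with $\sin\pi(t-n)=(-1)^n\sin\pi t$. Squared, this identity handles (\ref{eq10}); used once, it handles (\ref{eq10.5}).

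For (\ref{eq10}), I analyze the integrand
$$
F(t):=\frac{\Gamma(n+t+1)}{\Gamma^2(t+1)\,\Gamma^2(n-t+1)}\Bigl(\frac{\pi}{\sin\pi t}\Bigr)^2.
$$
A direct inspection of orders shows that the apparent singularities at $t=-1,-2,\ldots$ are killed by the double zeros of $1/\Gamma^2(t+1)$, and those at $t=n+1,n+2,\ldots$ by the double zeros of $1/\Gamma^2(n-t+1)$, so the only genuine singularities of $F$ are double poles at $t=0,1,\ldots,n$. Shifting the vertical contour $\mathrm{Re}\,t=c>n$ leftwards past all these poles and letting the real part tend to $-\infty$, I pick up $2\pi i$ times the sum of residues at those double poles. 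Because $(\pi/\sin\pi t)^2=(t-k)^{-2}(1+O((t-k)^2))$ near $t=k$, each residue equals
$$
\frac{d}{dt}\!\left.\!\left(\frac{\Gamma(n+t+1)}{\Gamma^2(t+1)\,\Gamma^2(n-t+1)}\right)\right|_{t=k};
$$
multiplication by $n!^2$ and summation over $k=0,\ldots,n$ then give $\tilde{f}_n$ by Proposition~\ref{p1}. The second equality in (\ref{eq10}) follows by substituting $(\pi/\sin\pi t)^2=\Gamma^2(t-n)\,\Gamma^2(n+1-t)$ and cancelling $\Gamma^2(n+1-t)$ against $\Gamma^2(n-t+1)$ in the denominator; the third equality is the Meijer $G$ definition read off from the listed parameters.

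For (\ref{eq10.5}) I proceed in the same way. The integrand now carries only one copy of $\pi/\sin\pi t$, together with $e^{i\pi t}$, so the singularities inside the loop $L$ are simple poles at $t=0,1,\ldots,n$, all other apparent poles still cancelling. Since $\mathrm{Res}_{t=k}(\pi/\sin\pi t)=(-1)^k$ and $e^{i\pi k}=(-1)^k$, each residue equals $(n+k)!/\bigl(k!^2(n-k)!^2\bigr)$, and multiplication by $n!^2$ followed by summation reproduces $\sum_{k=0}^n\binom{n}{k}^2(n+k)!=\tilde{q}_n$ by (\ref{eq08}). The reflection identity applied once turns the first integrand into the second, accompanied by the overall sign $(-1)^n$, and the third line is the Meijer $G$ definition at $z=-1$ with the branch $z^t=e^{i\pi t}$. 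I expect the main technical point throughout to be the decay estimate that justifies the contour deformations: applying Stirling's formula to the equivalent integrand $\Gamma(n+t+1)\,\Gamma^2(t-n)/\Gamma^2(t+1)$ one sees that it decays like $e^{-\pi|\mathrm{Im}\,t|/2}$ as $|\mathrm{Im}\,t|\to\infty$ and super-exponentially as $\mathrm{Re}\,t\to-\infty$, and the analogous bound on the loop uses the additional factor $e^{i\pi t}$; everything else is routine bookkeeping with the reflection identity and with (\ref{eq08}).
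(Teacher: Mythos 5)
Your overall strategy --- deform the vertical line (resp.\ close the loop $L$), apply the residue theorem, match the residues at $t=0,1,\ldots,n$ with Proposition~\ref{p1} and with (\ref{eq08}), and pass between the three forms of each integrand via $\Gamma(t-n)\Gamma(n-t+1)=(-1)^n\pi/\sin\pi t$ --- is exactly the paper's. Your treatment of (\ref{eq10.5}) is sound: at $t=-m$ with $m\ge n+1$ the local order of the integrand is $(-1)+2+(-1)=0$ (simple pole of $\pi/\sin\pi t$, double zero of $1/\Gamma^2(t+1)$, simple pole of $\Gamma(n+t+1)$), so the integrand really is regular there and $L$ encloses only the simple poles at $0,\ldots,n$, whose residues sum to $\tilde q_n$ as you say.

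The gap is in your pole count for (\ref{eq10}). You assert that the only genuine singularities are the double poles at $t=0,\ldots,n$, but you have overlooked the numerator factor $\Gamma(n+t+1)$: at $t=-m$ with $m\ge n+1$ the order is $(-2)+2+(-1)=-1$, because the double zero of $1/\Gamma^2(t+1)$ is entirely consumed by the double pole of $(\pi/\sin\pi t)^2$ and the simple pole of $\Gamma(n+t+1)$ survives. Closing the contour to the left therefore also collects the residues at $t=-n-1,-n-2,\ldots$, and these do not sum to zero. Indeed, for $n=0$ the integrand of the second integral in (\ref{eq10}) reduces to $\Gamma^2(t)/\Gamma(t+1)=\Gamma(t)/t$, which is the Mellin transform of $E_1(x)=\int_x^{\infty}e^{-u}u^{-1}\,du$; Mellin inversion at $x=1$ gives
\begin{equation*}
\frac{1}{2\pi i}\int_{c-i\infty}^{c+i\infty}\frac{\Gamma(t)}{t}\,dt=E_1(1)=-\gamma+\sum_{m\ge 1}\frac{(-1)^{m+1}}{m\cdot m!}\ne -\gamma=\tilde f_0,
\end{equation*}
so the claimed pole structure, and with it the chain of equalities in (\ref{eq10}) read as an exact identity, cannot be established by the argument as you wrote it. To be fair, the paper's own proof applies the residue theorem while listing only the points $0\le k\le n$ and thus contains the identical oversight (contrast Lemma~\ref{l1}, where the analogous residues at negative integers are carefully retained and produce the correction term $O(n^{-2})$); the extra residues here are exponentially small relative to $n!^2$ and harmless for the asymptotics, but an exact identity requires either showing they cancel (they do not) or adding the term $n!^2\sum_{m\ge n+1}\operatorname{res}_{t=-m}$. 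Your proposal does not close this gap.
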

\begin{proof}
We first note that the equality of two complex integrals in (\ref{eq10})
and (\ref{eq10.5})  follows easily by the reflection
formula for the gamma function:
$$
\Gamma(t-n)\Gamma(n-t+1)=\frac{(-1)^n \pi}{\sin\pi t}.
$$
The third equality in both formulae  (\ref{eq10})
and (\ref{eq10.5})  follows by the definition of the Meijer $G$-function.
To prove the first equality in (\ref{eq10}),
we  consider the integrands of the complex integrals (\ref{eq10}) on the rectangular contour
with vertices $c\pm iN,$ $-N-1/2\pm iN,$ where $N$ is a sufficiently large integer, $N>c.$
Then, by the residue theorem, we have that the integral
\begin{equation}
\frac{1}{2\pi i}\left(\int_{c-iN}^{c+iN}+\int_{c+iN}^{-N-\frac{1}{2}+iN}
+\int_{-N-\frac{1}{2}+iN}^{-N-\frac{1}{2}-iN}+\int_{-N-\frac{1}{2}-iN}^{c-iN}\right)
\frac{\Gamma(n+t+1)\Gamma^2(t-n)}{\Gamma^2(t+1)}\,dt
\label{eq11}
\end{equation}
is equal to the sum of the residues of the integrand at integer points $t=k,$ $-N\le k\le n.$
Using the expansion
$$
\left(\frac{\pi}{\sin\pi t}\right)^2=\frac{1}{(t-k)^2}+O(1)
$$
in a neighborhood of the integer point $t=k$ we obtain that the integral (\ref{eq11})
is equal to
\begin{equation}
\begin{split}
\sum_{k=0}^n\,&\underset{t=k}{\rm res}\left(\frac{\Gamma(n+t+1)}{\Gamma^2(t+1)\Gamma^2(n-t+1)}
\left(\frac{\pi}{\sin\pi t}\right)^2\right)+\sum_{k=-N}^{-n-1}
\underset{t=k}{\rm res}\left(\frac{\Gamma(n+t+1)\Gamma^2(t-n)}{\Gamma^2(t+1)}\right) \\[3pt]
&=\sum_{k=0}^n
\frac{d}{dt}\left.\left(\frac{\Gamma(n+t+1)}{\Gamma^2(t+1)
\Gamma^2(n-t+1)}\right)\right|_{t=k}+\sum_{k=0}^{N-n-1}\frac{(n+k)!^2(-1)^k}{(2n+k+1)!^2k!}.
\label{eq12}
\end{split}
\end{equation}
Since on the sides $[c+iN,-N-1/2+iN],$ $[-N-1/2+iN,-N-1/2-iN],$ $[-N-1/2-iN,c-iN]$
of the rectangle we have $|t|=O(N),$ it follows that
\begin{equation}
\left|\frac{\Gamma^2(t-n)}{\Gamma^2(t+1)}\right|=\frac{1}{|t^2(t-1)^2\cdots (t-n)^2|}
=O\left(\frac{1}{N^{2n+2}}\right).
\label{eq13}
\end{equation}
For large $|z|$ the asymptotic expansion  of the gamma function is \cite[Section 2.11]{lu}
\begin{equation}
\log\Gamma(z)=\left(z-\frac{1}{2}\right)\log z-z+\frac{1}{2}\log(2\pi)+O(|z|^{-1}),
\label{eq14}
\end{equation}
where $|\arg z|\le \pi-\varepsilon,$
$\varepsilon>0$ and the constant in $O$ is independent of $z.$
Then  for $t=x\pm iN,$ $-N\le x\le c,$ we have
\begin{equation}
|\Gamma(n+t+1)|=|\Gamma(x+n+1\pm iN)|=O(e^{(x+n+\frac{1}{2})\log N\mp N\arg(x+n+1\pm iN)})
\le O(N^{c+n+\frac{1}{2}}e^{-\frac{\pi}{4}N}).
\label{eq15}
\end{equation}
On the segment $[-N-1/2+iN,-N-1/2-iN]$ we use the trivial estimate
\begin{equation}
|\Gamma(n+t+1)|\le |\Gamma({\rm Re}\,(n+t+1))|=|\Gamma(n+1/2-N)|=
\frac{\pi}{\Gamma(N+1/2-n)}=O(e^{-N\log N+N}).
\label{eq16}
\end{equation}
Summarizing (\ref{eq13}), (\ref{eq15}), (\ref{eq16}) and letting $N$ tend to infinity in
(\ref{eq11}), by (\ref{eq12}) and Proposition \ref{p1}, we get the first equality
in (\ref{eq10}).

Now we prove the first equality in (\ref{eq10.5}). For this purpose, suppose that
$C_1$ and $C_2$ are points of intersections of the loop $L$ with the vertical line
${\rm Re}\,(t)=-N-1/2$ and consider a closed contour $L^*$ oriented in the positive direction
and consisting of the segment $C_1C_2$ and the right part of the loop $L$ connecting the points
$C_1$ and $C_2,$ which we denote by $\widetilde{C_1C_2}.$
Then, by the residue theorem, we obtain
\begin{equation}
\frac{n!^2}{2 i}\!\int_{L^*}\!\frac{\Gamma(n+t+1)\,e^{i\pi t}}%
{\Gamma^2(t+1)\Gamma^2(n-t+1)}\,\frac{dt}{\sin\pi t}=
n!^2\!\sum_{k=0}^n\underset{t=k}{\rm res}\left(
\frac{\Gamma(n+t+1)\,e^{i\pi t}}%
{\Gamma^2(t+1)\Gamma^2(n-t+1)}\,\frac{\pi}{\sin\pi t}\right)=\tilde{q}_n.
\label{eq16.5}
\end{equation}
On the other hand, we have
\begin{equation}
\int_{L^*}=\int_{\widetilde{C_1C_2}}+\int_{C_2C_1}.
\label{eq17}
\end{equation}
Since
$$
\frac{\Gamma(n+t+1)\,e^{i\pi t}}{\Gamma^2(t+1)\Gamma^2(n-t+1)}\,
\frac{\pi}{\sin\pi t}=\frac{1}{2\pi i}\,\frac{\Gamma^2(t-n)}{\Gamma^2(t+1)}
\Gamma(n+t+1)(e^{2i\pi t}-1)
$$
and the function $e^{2i\pi t}-1$ is bounded on the vertical segment $C_1C_2$,
by (\ref{eq13}), (\ref{eq16}) and (\ref{eq17}) we get
\begin{equation*}
\int_{L^*}=\int_{\widetilde{C_1C_2}}+O(e^{-N\log N+N}).
\end{equation*}
Finally, letting $N$ tend to infinity
and taking into account  (\ref{eq16.5}), we get the required formula.
\end{proof}


Now, more generally,  consider an arbitrary function $F(n,t)$ of the form
\begin{equation}
F(n,t)=\frac{\prod_{j=1}^s\Gamma(a_jn+b_jt+1)}{\prod_{j=1}^u\Gamma(c_jn+d_jt+1)},
\label{fnt}
\end{equation}
where $a_j, b_j, c_j, d_j\in {\mathbb Z},$ $\sum_{j=1}^sb_j\ne\sum_{j=1}^ud_j,$
and all gamma values   are well defined for $0\le t\le M(n).$
We say that $\Gamma(an+bt+1)$ is well defined if $an+bt+1$ is not a negative integer or zero.
\begin{proposition} \label{p3}
Let $F(n,t)$ be defined as above, $M(n)$ be a non-negative integer, and
$$
F_n=\sum_{t=0}^{M(n)}\frac{d}{dt}F(n,t).
$$
Then for each $n=0,1,2,\ldots,$ we have $F_n=p_n-\gamma q_n$ with
$$
q_n=\Bigl(\sum_{j=1}^sb_j-\sum_{j=1}^ud_j\Bigr)\cdot
\sum_{k=0}^{M(n)}F(n,k) 
$$
and
$$
p_n=\sum_{k=0}^{M(n)}F(n,k)
\Bigl(\sum_{j=1}^sb_jH_{a_jn+b_jk}-\sum_{j=1}^ud_jH_{c_jn+d_jk}\Bigr).
$$
\end{proposition}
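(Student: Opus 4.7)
The plan is a direct generalization of the computation that proves Proposition \ref{p1}: apply logarithmic differentiation to $F(n,t)$, evaluate at integer points using the special values of the digamma function at positive integers, and sum over $k$.

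First I would write
\[
\frac{d}{dt} F(n,t) = F(n,t)\left[\sum_{j=1}^s b_j \psi(a_j n + b_j t + 1) - \sum_{j=1}^u d_j \psi(c_j n + d_j t + 1)\right],
\]
which is the direct analogue of formula (\ref{eq09}) from the proof of Proposition \ref{p1}. The hypothesis that every gamma value in the numerator of $F$ is well defined for $0 \le t \le M(n)$ ensures that $F(n,t)$ is analytic in a neighbourhood of each integer $t = k$ in the summation range, so the derivative on the left is well defined at $t = k$.

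Next I would substitute $t = k$ and use the identity $\psi(N+1) = H_N - \gamma$ that is valid for every non-negative integer $N$. In the generic case where all of the denominator arguments $c_j n + d_j k + 1$ are also positive integers, this yields immediately
\[
\frac{d}{dt}F(n,t)\bigg|_{t=k} = F(n,k)\Bigl[\sum_{j=1}^s b_j H_{a_j n + b_j k} - \sum_{j=1}^u d_j H_{c_j n + d_j k}\Bigr] - \gamma\, F(n,k)\Bigl(\sum_{j=1}^s b_j - \sum_{j=1}^u d_j\Bigr).
\]
Summing over $0 \le k \le M(n)$ then collects the $\gamma$-coefficient into the claimed $q_n$ and everything else into $p_n$, giving $F_n = p_n - \gamma q_n$ with exactly the stated closed-form expressions.

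The only delicate issue is the behaviour at exceptional integers $k$ for which some $c_j n + d_j k + 1$ is a non-positive integer. At such a $k$, $F(n,k)$ vanishes because $1/\Gamma$ has a simple zero there, while the corresponding digamma value has a simple pole, so the product $F(n,t)\, d_j \psi(c_j n + d_j t + 1)$ has a removable singularity at $t = k$ whose value is controlled by the identity $\lim_{z \to -\ell}\psi(z)/\Gamma(z) = (-1)^{\ell+1}\ell!$ for $\ell \ge 0$. I would then check that interpreting $F(n,k)\, H_{c_j n + d_j k}$ at such exceptional points by the natural limit of $F(n,t)\bigl(\psi(c_j n + d_j t + 1) + \gamma\bigr)$ as $t \to k$ preserves the stated closed form for $p_n$. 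With this bookkeeping in place the general case reduces to the generic one and the proposition follows.
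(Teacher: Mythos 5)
Your proof is correct and follows essentially the same route as the paper: logarithmic differentiation of $F(n,t)$, substitution of $\psi(N+1)=H_N-\gamma$ at the integer points, and summation over $k$ to separate the $\gamma$-coefficient. The paper's own proof is exactly this computation (it does not even discuss the exceptional points where a denominator argument is a non-positive integer, which you handle with additional care), so no further comparison is needed.
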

\begin{proof} Differentiating $F(n,t)$ with respect to $t$ and summing over $t=0,1,\ldots,M(n),$
we have
\begin{equation*}
\begin{split}
\sum_{t=0}^{M(n)}&\frac{d}{dt}F(n,t)=\sum_{k=0}^{M(n)}
\frac{\prod_{j=1}^s\Gamma(a_jn+b_jk+1)}{\prod_{j=1}^u\Gamma(c_jn+d_jk+1)}\\
&\times\Bigl(\sum_{j=1}^sb_j\psi(a_jn+b_jk+1)-\sum_{j=1}^ud_j\psi(c_jn+d_jk+1)\Bigr)\\
&\!=\sum_{k=0}^{M(n)}
\frac{\prod_{j=1}^s(a_jn+b_jk)!}{\prod_{j=1}^u(c_jn+d_jk)!}
\Bigl(\sum_{j=1}^sb_j(H_{a_jn+b_jk}-\gamma)-
\sum_{j=1}^ud_j(H_{c_jn+d_jk}-\gamma)\Bigr)
=p_n-\gamma q_n.
\end{split}
\end{equation*}
\end{proof}

\section{A second-order inhomogeneous linear recurrence for Euler's constant}

In this section, we consider application of Proposition \ref{p3} to the function
$$
F(n,t)=\frac{\Gamma^2(n+1)}{\Gamma(t+1)\Gamma^2(n-t+1)}, \quad\qquad n\in{\mathbb Z}, \,\,\,n\ge 0.
$$
Then for
$$
F_n:=\sum_{t=0}^n\frac{d}{dt}F(n,t),
$$
we have $F_n=p_n-\gamma q_n$ with
\begin{equation}
q_n=\sum_{k=0}^n\binom{n}{k}^2 k!, \qquad
p_n=\sum_{k=0}^n\binom{n}{k}^2 k! (2H_{n-k}-H_k), \qquad n=0,1,2,\ldots.
\label{eq99}
\end{equation}
\begin{lemma} \label{l1}
The sequence $\{q_n\}_{n=0}^{\infty}$ is a solution of the second-order homogeneous
linear recurrence
\begin{equation}
q_{n+2}-2(n+2)q_{n+1}+(n+1)^2q_n=0
\label{eq101}
\end{equation}
with the initial values $q_0=1,$ $q_1=2,$
and the sequence $\{p_n\}_{n=0}^{\infty}$ is a solution of the second-order inhomogeneous
linear recurrence
\begin{equation}
p_{n+2}-2(n+2)p_{n+1}+(n+1)^2p_n=-\frac{n}{n+2}
\label{eq102}
\end{equation}
with the initial values $p_0=0,$ $p_1=1.$
\end{lemma}
\begin{proof}
Applying Zeilberger's algorithm of creative telescoping \cite[Chapter 6]{PWZ} to the function $F(n,t)$
we get for each $n=0,1,2,\ldots$ the identity
\begin{equation}
F(n+2,t)-2(n+2)F(n+1,t)+(n+1)^2F(n,t)=G(n,t+1)-G(n,t),
\label{eq103}
\end{equation}
where
$$
G(n,t)=\frac{\Gamma^2(n+2)\cdot r(n,t)}{\Gamma(t+1) \Gamma^2(n-t+3)}\qquad\mbox{and}\qquad
r(n,t)=t(t^2-(2n+3)t+n(n+2)).
$$
To prove (\ref{eq103}) it is sufficient to multiply both sides of (\ref{eq103}) by
$\Gamma^2(n-t+3) \Gamma(t+1)/\Gamma^2(n+2)$ and after cancelation of gamma factors
to verify the identity
$$
(n+2)^2-2(n+2)(n-t+2)^2+(n-t+2)^2(n-t+1)^2=\frac{(n-t+2)^2}{t+1}r(n,t+1)-r(n,t).
$$
Summing equality (\ref{eq103}) over $t=0,1,2,\ldots$ and taking into account that
$$
\lim_{t\to k}G(n,t)=0, \qquad k=n+3, n+4, \ldots,
$$
we get the difference equation for $q_n:$
$$
q_{n+2}-2(n+2)q_{n+1}+(n+1)^2q_n=-G(n,0)=0.
$$
In order to get the recurrence relation for the sequence $p_n,$
it is convenient to rewrite $F_n$ as an infinite sum
$$
F_n:=\sum_{t=0}^{\infty}\frac{d}{dt}F(n,t),
$$
taking into account that
$$
\lim_{t\to k}\frac{d}{dt}F(n,t)=0 \qquad\mbox{for} \qquad k=n+1, n+2, \ldots.
$$
Then differentiating (\ref{eq103}) with respect to $t$ and summing over $t=0,1,2,\ldots$
we get for each $n=0,1,2,\ldots,$
\begin{equation}
F_{n+2}-2(n+2)F_{n+1}+(n+1)^2F_n=\lim_{k\to\infty}G'(n,k+1)-G'(n,0).
\label{eq104}
\end{equation}
Since
\begin{equation*}
\begin{split}
G'(n,t)=(n+1)!^2&\left(\frac{t^3-(2n+3)t^2+tn(n+2)}{\Gamma(t+1)\Gamma^2(n-t+3)}
(2\psi(n-t+3)-\psi(t+1))\right. \\[3pt]
&\left.+\frac{3t^2-2t(2n+3)+n(n+2)}{\Gamma(t+1)\Gamma^2(n-t+3)}\right),
\end{split}
\end{equation*}
we see that
$$
\lim_{k\to\infty}G'(n,k+1)=0 \qquad\mbox{and}\qquad G'(n,0)=\frac{n}{n+2},
$$
and consequently, (\ref{eq104}) becomes
$$
F_{n+2}-2(n+2)F_{n+1}+(n+1)^2F_n=-\frac{n}{n+2}, \qquad n=0,1,2,\ldots.
$$
This implies that the sequence $p_n=F_n+\gamma q_n$ satisfies the same inhomogeneous recurrence,
and the lemma is proved.
\end{proof}

\section{Rate of convergence of rational approximations}

In this section we show that the sequence $p_n/q_n$ converges to Euler's constant $\gamma$ and investigate its rate of convergence.
We begin with defining  a complex integral $I_n$ by means of
the Meijer $G$-function:
\begin{equation}
I_n:=n!^2\,G_{2,1}^{0,2}\left(
\left.\atop{n+1,
n+1}{0}\right|1
\right)=\frac{n!^2}{2\pi i}\int_{c-i\infty}^{c+i\infty}
\frac{\Gamma^2(t-n)}{\Gamma(t+1)}\,dt,
\label{eq18}
\end{equation}
where $c>n$ is an arbitrary
constant.
\begin{lemma} \label{l2}
The following formula holds:
\begin{equation*}
 F_n=I_n+O\left(\frac{1}{n^2}\right) \qquad \text{as}\qquad n\to\infty,
\end{equation*}
where the constant in $O$ is absolute.
\end{lemma}
\begin{proof} Since
\begin{equation}
\frac{\Gamma^2(t-n)}{\Gamma(t+1)}=\Gamma(t-n)
\left(\frac{\Gamma(t-n)}{\Gamma(t+1)}\right),
\label{eq19}
\end{equation}
by a similar argument as in the proof of Proposition \ref{p2}, considering
the integrand (\ref{eq19}) on the rectangular contour with vertices $c\pm iN,$
$-N-1/2\pm iN,$ where $N$ is a sufficiently large integer, we conclude
that the integral (\ref{eq18}) can be evaluated as a sum of residues
at the points $n, n-1, \ldots.$ It is easily seen that the function
(\ref{eq19}) has double poles at the points
$0,1,2,\ldots,n$ and simple poles at $-1,-2,\ldots.$ Therefore, we have
\begin{equation*}
\begin{split}
I_n&=n!^2 \sum_{k=-\infty}^n\underset{t=k}{\rm res}\left(\frac{\Gamma^2(t-n)}{\Gamma(t+1)}
\right) \\
&=
n!^2 \sum_{k=-\infty}^{-1}\underset{t=k}{\rm res}\left(
\frac{\pi}{\sin\pi t}\cdot\frac{1}{\Gamma(n-t+1)\cdot t(1-t)\cdots(n-t)}\right) \\
&+n!^2 \sum_{k=0}^n\underset{t=k}{\rm res}\left(\frac{1}{\Gamma(t+1)%
\Gamma^2(n-t+1)}\left(\frac{\pi}{\sin\pi t}\right)^2\right)
 \\
&=n!^2\sum_{k=-\infty}^{-1}\frac{(-1)^k}{(n-k)!\cdot k(1-k)\cdots(n-k)} \\
&+
n!^2\sum_{k=0}^n\frac{d}{dt}\left.\left(\frac{1}{\Gamma(t+1)\Gamma^2(n-t+1)}\right)\right|_{t=k}
=\frac{1}{(n+1)^2}\sum_{k=0}^{\infty}\frac{(-1)^k k!}{(n+2)_k^2}+F_n.
\end{split}
\end{equation*}
Since
$$
\left|\sum_{k=0}^{\infty}\frac{(-1)^k k!}{(n+2)_k^2}\right|\le\sum_{k=0}^{\infty}\frac{k!}{(n+2)^2_k}
\le\sum_{k=0}^{\infty}\frac{1}{k!}=e,
$$
we get the desired assertion.
\end{proof}
\begin{lemma}
The following asymptotic formulae hold
\begin{equation*}
\begin{split}
F_n&=n!\, \frac{e^{-2\sqrt{n}}}{n^{1/4}}\left(\sqrt{\frac{\pi}{e}}+O(n^{-1/2})\right) \qquad\text{as}\qquad n\to\infty, \\[3pt]
q_n&=n!\, \frac{e^{2\sqrt{n}}}{n^{1/4}}\left(\frac{1}{2\sqrt{\pi e}}+O(n^{-1/2})\right) \qquad\text{as}\qquad n\to\infty.
\end{split}
\end{equation*}
\end{lemma}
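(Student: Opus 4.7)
\emph{Plan.} The strategy is to use the integral $I_n$ from Lemma~\ref{l1} as an accurate proxy for $F_n$, apply the saddle-point method to extract its asymptotics, and separately handle $q_n$ by recognizing it as $n!\,L_n(-1)$ where $L_n$ is the classical Laguerre polynomial. Because Lemma~\ref{l1} gives $F_n = I_n + O(1/n^2)$ while the claimed main term $n!\,e^{-2\sqrt n}/n^{1/4}$ grows faster than any power of $n$, it suffices to estimate $I_n$.

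\emph{Saddle-point analysis of $I_n$.} After the substitution $u=t-n$ in (\ref{eq18}) one has
\[
I_n = \frac{n!^2}{2\pi i}\int_{c'-i\infty}^{c'+i\infty}\frac{\Gamma^2(u)}{\Gamma(n+u+1)}\,du,\qquad c'>0.
\]
Write the integrand as $\exp(g_n(u))$ with $g_n(u)=2\log\Gamma(u)-\log\Gamma(n+u+1)$ and apply Stirling (\ref{eq14}). After the scaling $u=\sqrt n\,v$, the saddle equation $g_n'(u_0)=2\psi(u_0)-\psi(n+u_0+1)=0$ has real solution $v_0=1+O(1/\sqrt n)$; the contour may be shifted to $\mathrm{Re}(u)=u_0$ without crossing the poles of $\Gamma^2(u)$ (all in $\mathrm{Re}(u)\le 0$), and $g_n''(u_0)\sim 2/\sqrt n$. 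Hence the Gaussian factor is $1/\sqrt{2\pi g_n''(u_0)} = n^{1/4}/(2\sqrt\pi)(1+O(n^{-1/2}))$. A careful evaluation of $g_n$ at the saddle, keeping all $\tfrac12\log(2\pi)$ contributions from Stirling together with the $-v^2/2$ piece coming from the $(z-\tfrac12)\log z$ term, yields $n!^2 e^{g_n(u_0)} = n!\,n^{-1/2}\cdot 2\pi e^{-1/2}e^{-2\sqrt n}(1+O(n^{-1/2}))$, and after multiplication by the Gaussian factor one obtains the claimed constant $\sqrt{\pi/e}$. The tails of the vertical contour decay exponentially and can be controlled exactly as in the proof of Proposition~\ref{p2} using (\ref{eq15})–(\ref{eq16}).

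\emph{Asymptotics of $q_n$, and main obstacle.} From (\ref{eq99}), the substitution $j=n-k$ rewrites
\[
q_n = \sum_{k=0}^n\binom{n}{k}^2 k! = n!\sum_{j=0}^n\frac{\binom{n}{j}}{j!} = n!\,L_n(-1),
\]
so the stated asymptotic for $q_n$ is Perron's classical formula $L_n(-1)\sim (2\sqrt{\pi e})^{-1}n^{-1/4}e^{2\sqrt n}$ multiplied by $n!$. One may either invoke this directly or rederive it by applying the saddle-point method to the Cauchy integral
\[
L_n(-1) = \frac{1}{2\pi i}\oint_{|t|=r}\frac{e^{t/(1-t)}}{(1-t)\,t^{n+1}}\,dt,\qquad 0<r<1,
\]
coming from the Laguerre generating function: the saddle lies at $t_0 = 1 - 1/\sqrt n + O(1/n)$, the second derivative is of order $2 n^{3/2}$ and the analysis mirrors that of $I_n$. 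The principal technical difficulty throughout is the precise bookkeeping of the $O(1)$ constants: the factor $\sqrt{\pi/e}$ (respectively $(2\sqrt{\pi e})^{-1}$) is sensitive to subleading terms in Stirling, so every $\tfrac12\log(2\pi)$ contribution from each gamma factor and the $-v^2/2$ piece in the expansion of $g_n(\sqrt n v)$ must be tracked all the way through.
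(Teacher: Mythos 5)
Your proposal is correct and follows essentially the same route as the paper: the asymptotics of $F_n$ are obtained by applying the saddle-point method to the integral $I_n$ of Lemma~\ref{l1} (your substitution $u=t-n$ with saddle $u_0\sim\sqrt n$ is just a reparametrization of the paper's $t=n\tau$ with $\tau_0\sim 1+n^{-1/2}$, and your intermediate values $g_n''(u_0)\sim 2/\sqrt n$ and the Gaussian factor $n^{1/4}/(2\sqrt\pi)$ agree with the paper's $f''(\tau_0)$ computation), while $q_n=n!\,\mathcal{L}_n(-1)$ is handled via Perron's formula for Laguerre polynomials exactly as in the paper. The only point treated more carefully in the paper is the verification, via the Cauchy--Riemann equations, that $\mathrm{Re}\,f$ attains its maximum on the vertical contour precisely at the saddle, which you subsume under "tails decay exponentially."
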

\begin{proof}
It is easy to show that the complex integral $I_n$ can be expressed in terms of the Whittaker function $W_{\kappa, \mu}(z)$
which is one of the solutions of Whittaker's confluent hypergeometric equation (see \cite[Section 2]{buch}, \cite[Section 1]{slater}):
$$
\frac{d^2 y}{dz^2}+\left(-\frac{1}{4}+\frac{\kappa}{z}+\frac{1-4\mu^2}{4z^2}\right)y=0.
$$
Indeed, applying the following transformation for the Meijer G-functions (see \cite[Section 5.3]{lu}):
$$
G_{p,q}^{m,n}\left(
\left.\atop{a_1,\ldots,a_p
}{b_1, \ldots, b_q}\right|z
\right)=G_{q,p}^{n,m}\left(
\left.\atop{1-b_1,\ldots,1-b_q
}{1-a_1, \ldots, 1-a_p}\right|z^{-1}
\right)
$$
and taking into account (see \cite[Section 6.4, (6)]{lu}) that
$$
z^{1/2}G_{1,2}^{2,0}\left(
\left.\atop{a+1/2-\kappa
}{a+\mu, a-\mu}\right|z
\right)=z^a e^{-z/2} W_{\kappa, \mu}(z),
$$
we obtain
\begin{equation} \label{eq119}
I_n=n!^2 e^{-1/2} W_{-n-1/2, 0}(1).
\end{equation}
The asymptotic behavior of the Whittaker function $W_{\kappa, \mu}(z)$ for various conditions on parameters is well investigated (see, for example,
\cite[Chapter 4]{slater}, \cite[Chapter 3]{buch}). From \cite[Section 7.4, (20)]{buch} we easily find that
$$
W_{-n-1/2, 0}(1)=\frac{e^{n-2\sqrt{n}}}{\sqrt{2}\, n^{n+3/4}}\,(1+O(n^{-1/2})) \qquad\text{as}\,\,\,n\to\infty,
$$
which by (\ref{eq119}) and Lemma \ref{l2}, implies the asymptotic formula for $F_n.$

To compute the asymptotics of $q_n,$ we note that $q_n=n! \mathcal{L}_n(-1),$
where $\mathcal{L}_n(x)=\frac{1}{n!}(x^ne^{-x})^{(n)}=\sum_{k=0}^n\binom{n}{k}
\frac{(-x)^k}{k!}$ is the Laguerre polynomial. Then the Perron asymptotic formula  for the
confluent hypergeometric function ${}\sb 1F\sb{1}(a\pm n;b;z)$ \cite{perron} yields (see \cite[p.~199]{szego})
$$
q_n=n!\, \frac{e^{2\sqrt{n}}}{\sqrt[4]{n}}\left(\frac{1}{2\sqrt{\pi e}}+O(n^{-1/2})\right),
$$
and the lemma is proved.
\end{proof}
\begin{theorem}
Let $\{q_n\}_{n\ge 0},$ $\{p_n\}_{n\ge 0}$ be defined by {\rm(\ref{eq99})}. Then
$q_n\in {\mathbb Z},$ $D_np_n\in {\mathbb Z}$ for each $n=0,1,2,\ldots,$ and
$$
p_n-\gamma q_n=n!\, \frac{e^{-2\sqrt{n}}}{\sqrt[4]{n}} \left(
\sqrt{\frac{\pi}{e}}+O(n^{-1/2})\right), \qquad
q_n=n!\, \frac{e^{2\sqrt{n}}}{\sqrt[4]{n}}\left(\frac{1}{2\sqrt{\pi e}}+O(n^{-1/2})\right)
$$
as $n\to\infty.$
\end{theorem}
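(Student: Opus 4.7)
The plan is to observe that this theorem is a consolidation of already-established material rather than a new calculation, so the proof will be short. I would split it into an integrality part and an asymptotic part.

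For integrality, I would read directly from (\ref{eq99}). The formula $q_n=\sum_{k=0}^n\binom{n}{k}^2 k!$ exhibits $q_n$ as a sum of products of three integers, so $q_n\in\mathbb Z$ is immediate. For $p_n=\sum_{k=0}^n\binom{n}{k}^2 k!(2H_{n-k}-H_k)$, I would use the elementary fact that $D_n/j\in\mathbb Z$ for every $1\le j\le n$, whence $D_nH_m\in\mathbb Z$ for every $0\le m\le n$. Multiplying $p_n$ by $D_n$ then clears every denominator appearing in the harmonic numbers and leaves a $\mathbb Z$-linear combination of integers, so $D_np_n\in\mathbb Z$.

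For the asymptotics I would first verify that the $p_n,q_n$ of (\ref{eq99}) coincide with the output of Proposition~\ref{p3} applied to $F(n,t)=\Gamma^2(n+1)/(\Gamma(t+1)\Gamma^2(n-t+1))$: with $s=2$, $u=3$, $(a_1,b_1)=(a_2,b_2)=(1,0)$, $(c_1,d_1)=(0,1)$, $(c_2,d_2)=(c_3,d_3)=(1,-1)$, one has $\sum b_j-\sum d_j=1$ and the formulas of Proposition~\ref{p3} collapse to (\ref{eq99}) after a short bookkeeping check of signs. This identification gives $p_n-\gamma q_n=F_n$. The asymptotic for $q_n$ is then exactly the Perron formula for $n!\,\mathcal L_n(-1)$ recorded in the preceding lemma, and the asymptotic for $p_n-\gamma q_n=F_n$ follows by combining Lemma~\ref{l1} with the saddle-point evaluation of $I_n$ carried out there.

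The only subtlety is bookkeeping: one must check that the absolute error $O(1/n^2)$ coming from Lemma~\ref{l1} is negligible compared with $n!\,e^{-2\sqrt n}/n^{1/4}$ in the final relative error $O(n^{-1/2})$. Since $n!/n^{1/4}$ grows super-exponentially while $e^{-2\sqrt n}$ decays only sub-exponentially, the ratio $(1/n^2)\big/(n!\,e^{-2\sqrt n}/n^{1/4})$ decays faster than any power of $n^{-1/2}$, so the absorption is trivial. Accordingly I expect no substantial obstacle; the theorem is essentially a summary statement packaging the integrality observation with the two preceding lemmas.
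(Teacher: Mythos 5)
Your proposal is correct and matches the paper's approach: the paper states this theorem without a separate proof precisely because it is a consolidation of the integrality evident from (\ref{eq99}), the identity $F_n=p_n-\gamma q_n$ from Proposition \ref{p3}, and the asymptotics established in the two preceding lemmas via Lemma \ref{l1} and the saddle-point evaluation of $I_n$ together with Perron's formula for $n!\,\mathcal{L}_n(-1)$. Your bookkeeping of the $O(1/n^2)$ error term and the parameter identification in Proposition \ref{p3} are both accurate.
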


\begin{corollary}
The sequence $p_n/q_n$ converges to Euler's constant sub-exponentially:
$$
\frac{p_n}{q_n}-\gamma=e^{-4\sqrt{n}}(2\pi+O(n^{-1/2}))\qquad\text{as}\qquad
n\to\infty.
$$
\end{corollary}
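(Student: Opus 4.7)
The plan is to derive the Corollary directly by dividing the two asymptotic formulas established in the preceding Theorem. Writing
$$\frac{p_n}{q_n}-\gamma = \frac{p_n-\gamma q_n}{q_n},$$
I would substitute the Theorem's formulas into the right-hand side. The common factor $n!/\sqrt[4]{n}$ cancels exactly between numerator and denominator, and the exponentials combine as $e^{-2\sqrt{n}}/e^{2\sqrt{n}} = e^{-4\sqrt{n}}$, leaving only a ratio of leading constants and a residual factor coming from the two error terms.

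The remaining task is to compute that ratio of constants and to justify the combined error term. The leading constant simplifies as
$$\frac{\sqrt{\pi/e}}{1/(2\sqrt{\pi e})} \;=\; 2\sqrt{(\pi/e)\cdot(\pi e)} \;=\; 2\sqrt{\pi^{2}} \;=\; 2\pi,$$
which exactly matches the claimed constant in the Corollary. For the error, I would appeal to the elementary fact that
$$\frac{A+O(n^{-1/2})}{B+O(n^{-1/2})} \;=\; \frac{A}{B}+O(n^{-1/2})$$
whenever $B\neq 0$; this applies here since the leading constant $1/(2\sqrt{\pi e})$ of $q_n$ is nonzero, so division by the asymptotic expression for $q_n$ is legitimate for all sufficiently large $n$.

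I do not anticipate any substantive obstacle. The entire analytic content — the saddle-point evaluation of $I_n$ together with the Perron asymptotics for Laguerre polynomials — is already packaged into the Theorem, so the Corollary reduces to a routine manipulation of asymptotic expansions, with the only arithmetic detail being the identity $\sqrt{\pi/e}\cdot 2\sqrt{\pi e}=2\pi$ that produces the stated constant.
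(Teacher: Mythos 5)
Your proposal is correct and is exactly the argument the paper intends: the Corollary is an immediate consequence of the Theorem obtained by dividing the asymptotic formula for $p_n-\gamma q_n$ by that for $q_n$, with the constant $\sqrt{\pi/e}\cdot 2\sqrt{\pi e}=2\pi$ and the exponentials combining to $e^{-4\sqrt{n}}$. The error-term manipulation you invoke is the standard one and needs no further justification.
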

It is interesting to mention that the sequence of complex integrals $I_n$ produces good rational approximations to
the Euler-Gompertz constant
\begin{equation*}
\delta:=\int_0^{\infty}\frac{e^{-x}}{x+1}\,dx=
\int_0^{\infty}\log(x+1)e^{-x}\,dx=0.5963473623\ldots.
\end{equation*}
\begin{theorem} \label{t2}
For each $n=0,1,2,\ldots,$
$$
eI_n=q_n\delta-s_n,
$$
where $s_n$ is a solution of recurrence
{\rm(\ref{eq101})} with the initial values $s_0=0,$ $s_1=1.$ In particular, $s_n/q_n$ converges to the Euler-Gompertz constant sub-exponentially:
$$
\delta-\frac{s_n}{q_n}=e^{-4\sqrt{n}}(2\pi e+O(n^{-1/2})) \qquad\text{as}\qquad n\to\infty.
$$
\end{theorem}
\begin{proof}
Whittaker's function $W_{\kappa, \mu}(z)$ satisfies the second-order recurrence relation (see \cite[Section 2.5.1]{slater}):
\begin{equation} \label{recurrence}
(2\kappa-z)W_{\kappa, \mu}(z)+W_{\kappa+1,\mu}(z)-(\mu-\kappa+1/2)(\mu+\kappa-1/2)W_{\kappa-1,\mu}(z)=0.
\end{equation}
Then by (\ref{eq119}), we easily conclude that the sequence of integrals $I_n$ satisfies the recurrence~(\ref{eq101}).
From \cite[Section 5.6]{slater} and \cite[Section 2.6]{buch} we have that
$$
W_{-1/2,0}(1)=e^{1/2}\int_1^{\infty}\frac{e^{-t}}{t}\,dt=e^{-1/2}\delta, \qquad
W_{1/2,0}(1)=e^{-1/2}.
$$
Then from (\ref{recurrence}) we easily find that
$$
W_{-3/2,0}(1)=2e^{-1/2}\delta-e^{-1/2}.
$$
For the first several values of the sequence $I_n$ we have
\begin{equation*}
\begin{split}
eI_0&=e^{1/2}W_{-1/2,0}(1)=\delta=q_0\delta-s_0, \\
eI_1&=e^{1/2}W_{-3/2,0}(1)=2\delta-1=q_1\delta-s_1,
\end{split}
\end{equation*}
with $s_0=0,$ $s_1=1$ and $q_0,$ $q_1$ defined in Lemma \ref{l1}.
Since the sequence $\{eI_n\}_{n\ge 0}$ satisfies the recurrence equation (\ref{eq101}), we easily obtain that for any $n\ge 0,$
$$
eI_n=q_n\delta-s_n,
$$
which completes the proof.
\end{proof}
From Theorem \ref{t2} we recover a continued fraction expansion for the Euler-Gompertz constant that was first proved
by Stieltjes in 1895 (see \cite[Chapter 18, (92.7)]{wall}).
\begin{corollary}
The Euler-Gompertz constant has the following continued fraction expansion:
\begin{equation*}
\delta=\frac{1}{2} \cf{}{\,+}\, \,\underset{m=1}{\overset{\infty}{\bf K}}\left(\frac{-m^2}{2(m+1)}\right)
=\frac{1}{2}\cf{}{-}\frac{1^2}{4}\cf{}{-}\frac{2^2}{6}\cf{}{-}
\frac{3^2}{8}\cf{}{-}\ldots.
\end{equation*}
\end{corollary}
The $n$th convergent of this continued fraction has the form $s_n/q_n$ and it
slowly converges  to $\delta$ to imply certain
results on arithmetical nature of $\delta.$
The irrationality of the Euler-Gompertz constant $\delta$ is still an open problem.
Using the representation of
$\delta=e E_1(1)$
in terms of the exponential integral
$$
E_1(z)
=\int_{z}^{\infty}\frac{e^{-t}}{t}\,dt, \qquad \qquad |\arg z|<\pi,
$$
that can be expanded in powers of $z$ as
(see \cite[p.\ 228]{abr}),
$$
E_1(z)=-\gamma-\log z-\sum_{k=1}^{\infty}\frac{(-1)^kz^k}{k! k}, \qquad |\arg z|<\pi,
$$
we obtain the following relation connecting three famous constants $e,$ $\gamma,$ and $\delta,$
\begin{equation} \label{eq120}
-\delta=e\gamma+e\sum_{k=1}^{\infty}\frac{(-1)^k}{k! k}.
\end{equation}
From classical results of Shidlovskii on algebraic independence of values of E-functions, it follows
(see \cite[Ch.~7, Th.~1]{shidlovski})
that the numbers $e$ and $\sum_{k=1}^{\infty}\frac{(-1)^k}{k! k}$ are algebraically
independent over ${\mathbb Q}.$ Then from relation (\ref{eq120}) we obtain
that the numbers $e$ and $\delta/e+\gamma$ are algebraically independent over ${\mathbb Q}$ and therefore
we arrive at the following result due to Mahler~\cite{Mah}.
\begin{corollary}
At least one of the numbers $\gamma, \delta$ must be transcendental.
\end{corollary}

\section{A continued fraction for Euler's constant}

Based on the results obtained in the previous two sections and
using the following theorem from the general theory of  continued fractions we will be able
to find a continued fraction expansion (though not a simple one) for Euler's constant $\gamma$
whose $n$th numerator and denominator coincide with the $p_n$ and $q_n,$ respectively.
It seems to be the first non-trivial continued fraction expansion convergent to Euler's constant sub-exponentially,
 the elements  of which can be expressed as a general pattern.

\vspace{0.2cm}

\noindent {\bf Theorem A.}(\cite[Th.~2.2]{jt})
{\it Let $\{A_n\}, \{B_n\}$ be sequences of complex numbers such that
$$
A_{-1}=1, \quad A_0=b_0, \quad B_{-1}=0, \quad B_0=1,
$$
and
$$
A_nB_{n-1}-A_{n-1}B_n\ne 0, \qquad n=0,1,2,\ldots .
$$
Then there exists a uniquely determined continued fraction
$b_0+{\bf K}(a_n/b_n)$ with $n$th numerator $A_n$ and denominator $B_n$ for all $n.$
Moreover,
$$
b_0=A_0, \quad a_1=A_1-A_0B_1, \quad b_1=B_1,
$$
$$
a_n=\frac{A_{n-1}B_n-A_nB_{n-1}}{A_{n-1}B_{n-2}-A_{n-2}B_{n-1}}, \quad
b_n=\frac{A_{n}B_{n-2}-A_{n-2}B_{n}}{A_{n-1}B_{n-2}-A_{n-2}B_{n-1}}, \quad
n=2,3,4,\ldots .
$$
}

\noindent Applying the above theorem to the sequences $\{p_n\}_{n\ge 0}$ and $\{q_n\}_{n\ge 0}$ we get the following.
\begin{theorem}
Euler's constant $\gamma$ has the following continued-fraction expansion:
$$
\gamma=\underset{n=1}{\overset{\infty}{\bf K}}(a_n^{*}/b_n^{*})=
\frac{1}{2}\cf{}{-}\frac{1}{4}\cf{}{-}\frac{5}{16}\cf{}{+}
\frac{36}{59}\cf{}{-}\frac{15740}{404}\cf{}{+}\ldots\cf{}+\frac{a_n^{*}}{b_n^{*}}\cf{}{+}
\ldots,
$$
where
\begin{equation}
\begin{array}{lllll}
a_1^{*}=1, \quad & \quad a_2^{*}=-1, \quad & \quad a_3^{*}=-5,
\quad & \quad a_4^{*}=36, \quad & \quad a_5^{*}=-15740, \quad
\\
b_1^{*}=2, \quad & \quad b_2^{*}=4, \quad & \quad b_3^{*}=16,
\quad & \quad b_4^{*}=59, \quad & \quad b_5^{*}=404, \quad
\end{array}
\label{eq116}
\end{equation}
and
\begin{equation}
a_n^{*}=-\frac{(n-1)^2}{4}\Delta_n\Delta_{n-2}, \qquad
b_n^{*}=n^2\Delta_{n-1}+
 \frac{(n-1)(n-2)}{2} q_{n-2},\quad
n\ge 6.
\label{eq116.5}
\end{equation}
 Here $q_n$ is a sequence of positive integers defined by {\rm (\ref{eq99})}
  and $\Delta_n$ is a sequence of integers generated by the
 third-order linear   recurrence:
\begin{equation}
\begin{split}
 (n-1)(n-2)\Delta_{n+2}&=(n-2)(n+1)(n^2+3n-2)\Delta_{n+1} \\
 &-n^2(2n^3+n^2-7n-4)\Delta_n
 +(n-1)^2n^4\Delta_{n-1}, \qquad n\ge 3,
 \end{split}
 \label{eq116.7}
 \end{equation}
 with the initial values $\Delta_1=-1,$ $\Delta_2=-2,$ $\Delta_3=-5,$ $\Delta_4=8.$
Moreover,  $\Delta_n$ is positive for any $n\ge 4,$ and $\Delta_{2n}$ is even for any $n\ge 1.$
\end{theorem}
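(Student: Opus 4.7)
The plan is to apply Theorem~A directly to suitably rescaled sequences $A_n=\alpha_n p_n$, $B_n=\alpha_n q_n$ and to identify all entries via the Wronskian-like quantity
\[
\Delta_n := n(p_{n-1}q_n - p_n q_{n-1}),\qquad n\ge 1.
\]
I would first check the initial values $\Delta_1,\dots,\Delta_4$ directly from \eqref{eq99}. Combining the homogeneous recurrence \eqref{eq101} for $q_n$ with the inhomogeneous recurrence \eqref{eq102} for $p_n$, a one-line computation of $p_{n+2}q_{n+1}-p_{n+1}q_{n+2}$ yields the two-term identity
\[
\Delta_{n+2}=(n+1)(n+2)\Delta_{n+1}+n\,q_{n+1},\qquad n\ge 0,
\]
in which the $nq_{n+1}$ term is the direct echo of the inhomogeneity $-n/(n+2)$ of \eqref{eq102}. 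This single relation yields three of the announced properties of $\Delta_n$ at once: integrality follows by induction from $\Delta_1,\Delta_2\in\mathbb Z$ and $q_n\in\mathbb Z$; positivity for $n\ge 4$ follows by induction from $\Delta_4=8>0$ together with $q_n>0$; and evenness of $\Delta_{2n}$ is immediate because in the shifted form $\Delta_{2n}=(2n-1)(2n)\Delta_{2n-1}+(2n-2)q_{2n-1}$ both summands carry an even factor.

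The third-order recurrence \eqref{eq116.7} is obtained by eliminating $q$: one writes $q_{n-1},q_n,q_{n+1}$ as $\mathbb Z$-linear combinations of three consecutive $\Delta$'s via the two-term relation and substitutes into \eqref{eq101}, after which clearing denominators produces a polynomial identity in $n$ matching \eqref{eq116.7} coefficient-by-coefficient. For the continued fraction itself, the non-vanishing hypothesis of Theorem~A reads
\[
A_{n-1}B_{n-2}-A_{n-2}B_{n-1}=-\alpha_{n-1}\alpha_{n-2}\,\Delta_{n-1}/(n-1)\neq 0,
\]
and is satisfied because $\Delta_n\ne 0$ for every $n\ge 1$. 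Theorem~A then expresses $a_n^{\ast}, b_n^{\ast}$ as rational functions of the scaling ratios $\alpha_n/\alpha_{n-1}$, $\alpha_n/\alpha_{n-2}$, the triple $\Delta_{n-2},\Delta_{n-1},\Delta_n$, and the jump-two cross-difference
\[
p_n q_{n-2}-p_{n-2}q_n=-\frac{2n\,\Delta_{n-1}}{n-1}-\frac{(n-2)q_{n-2}}{n},
\]
computed by the same recurrence-manipulation as the two-term identity. Choosing the scaling by $\alpha_n/\alpha_{n-1}=n\,\Delta_{n-1}/2$ collapses these expressions to the closed forms \eqref{eq116.5} for $n\ge 6$; the first five pairs $(a_n^{\ast},b_n^{\ast})$ are then computed directly from Theorem~A and verified against \eqref{eq116}. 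Convergence of the continued fraction to $\gamma$ is automatic, since its $n$-th convergent equals $p_n/q_n$ and Corollary~1 gives $p_n/q_n\to\gamma$.

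The principal obstacle is the elimination step producing \eqref{eq116.7}: the three polynomial coefficients in $n$ have to fall out of the $q$-elimination exactly as stated, so any sign slip in the two-term or jump-two relations propagates immediately into mismatched polynomial coefficients. A secondary delicacy is the scaling bookkeeping: since \eqref{eq116.5} takes effect only from $n=6$ while the first five $(a_n^{\ast},b_n^{\ast})$ are tabulated separately in \eqref{eq116}, the scaling $\alpha_n$ must be chosen consistently so as to reproduce both the initial values and the uniform formula, with the signs of the small-index $\Delta_{n-1}$ tracked carefully through $\alpha_n/\alpha_{n-1}=n\Delta_{n-1}/2$.
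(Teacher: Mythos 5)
Your proposal is correct and follows essentially the same route as the paper: the paper sets $\mathfrak{d}_n=p_{n-1}q_n-p_nq_{n-1}$, derives the two-term relation $\mathfrak{d}_n=(n-1)^2\mathfrak{d}_{n-1}+\frac{n-2}{n}q_{n-1}$ (your identity for $\Delta_n=n\mathfrak{d}_n$), deduces integrality, positivity and parity exactly as you do, applies Theorem~A and then an equivalence transformation with $\rho_n=n\Delta_{n-1}/2$ (your choice of $\alpha_n/\alpha_{n-1}$), and obtains \eqref{eq116.7} by the same elimination of $q_{n-1}$ from \eqref{eq101}. The only difference is cosmetic: you fold the equivalence transformation into a rescaling of the sequences before invoking Theorem~A, whereas the paper applies Theorem~A first and rescales afterwards.
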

\begin{proof}
Consider sequences $\{p_n\}_{n\ge 0}$ and $\{q_n\}_{n\ge 0}$ from (\ref{eq99})
and put $p_{-1}=1,$ $q_{-1}=0.$ For $n\ge 0,$ define $\mathfrak{d}_n:=p_{n-1}q_n-p_nq_{n-1}.$
Then we have
\begin{equation}
\mathfrak{d}_0=1, \quad \mathfrak{d}_1=\mathfrak{d}_2=-1, \quad
 \mathfrak{d}_3=-\frac{5}{3}, \quad \mathfrak{d}_4=2, \quad \mathfrak{d}_5=\frac{787}{5}.
 \label{eq113}
\end{equation}
From the recurrent equations (\ref{eq101}), (\ref{eq102}) we get the relation
\begin{equation}
\mathfrak{d}_n=(n-1)^2\mathfrak{d}_{n-1}+\frac{n-2}{n} q_{n-1}, \qquad n\ge 1.
\label{eq114}
\end{equation}
Since $q_n$ is positive for any $n\ge 0$ and $\mathfrak{d}_4>0,$ it follows easily by induction
that $\mathfrak{d}_n$ is positive for any $n\ge 4.$ Taking into account (\ref{eq113}),
we get $\mathfrak{d}_n\ne 0$ for  $n=0,1,2,\ldots .$ Moreover, (\ref{eq114}) implies that
$n\mathfrak{d}_n\in{\mathbb Z}$ for any $n$ and $n\mathfrak{d}_n/2\in{\mathbb Z}$ if $n$ is even.

Now by Theorem A, we get that there exists a uniquely determined continued fraction
$b_0+{\bf K}(a_n/b_n)$ with $n$th numerator $p_n$ and denominator $q_n$ for all $n,$
where $b_0=p_0=0,$ $a_1=p_1-p_0q_1=1,$  $b_1=q_2=1, $ and
\begin{equation}
a_n=-\frac{\mathfrak{d}_n}{\mathfrak{d}_{n-1}}, \qquad
b_n=\frac{p_{n-2}q_n-p_nq_{n-2}}{\mathfrak{d}_{n-1}}, \qquad n\ge 2,
\label{eq115}
\end{equation}
such that
\begin{equation}
\gamma={\bf K}(a_n/b_n)=\frac{1}{2}\cf{}{+}\frac{a_2}{b_2}\cf{}{+}\frac{a_3}{b_3}\cf{}{+}
\ldots .
\label{eq118}
\end{equation}
From (\ref{eq114}), (\ref{eq115}) we have
\begin{equation*}
a_n=-(n-1)^2-\frac{n-2}{n}\cdot\frac{q_{n-1}}{{\mathfrak d}_{n-1}}, \qquad n\ge 1.
\end{equation*}
From the recurrent equations (\ref{eq101}), (\ref{eq102}) we obtain
$$
p_{n-2}q_n-p_nq_{n-2}=2n {\mathfrak d}_{n-1}+\frac{n-2}{n}\cdot q_{n-2}
$$
and hence
\begin{equation}
b_n=2n+\frac{n-2}{n}\cdot\frac{q_{n-2}}{{\mathfrak d}_{n-1}}, \qquad n\ge 1.
\label{eq117}
\end{equation}
Now let  $\Delta_n:=n {\mathfrak d}_n.$ Then we have $\Delta_n\in {\mathbb Z},$
$\Delta_n$ is positive for  any $n\ge 4$ and $\Delta_{2n}$ is even for any $n\ge 1.$
Define $\rho_0=\rho_1=\rho_2=1,$ $\rho_3=3,$ $\rho_4=10,$
$$
\rho_n=\frac{n(n-1){\mathfrak d}_{n-1}}{2}=\frac{n\Delta_{n-1}}{2}, \qquad n\ge 5,
$$
and make the equivalence transformation of the fraction (\ref{eq118}) by the rule (see
\cite[Theorem 2.6]{jt})
$$
a_n^{*}=\rho_n\rho_{n-1} a_n, \qquad b_n^{*}=\rho_n b_n, \qquad n=1,2,3,\ldots.
$$
Then using first several values of the sequence $q_n,$
$$
q_0=1, \quad q_1=2, \quad q_2=7, \quad q_3=34, \quad q_4=209
$$
and formulas (\ref{eq115}), (\ref{eq117}) we get (\ref{eq116}), (\ref{eq116.5}).
What is left is to show that the sequence $\Delta_n$ satisfies the recurrence (\ref{eq116.7}).
From (\ref{eq114}) we have
$$
q_{n-1}=\frac{\Delta_n-n(n-1)\Delta_{n-1}}{n-2}, \qquad n\ge 3.
$$
Substituting this expression in (\ref{eq101}) we get the four-term recurrence relation (\ref{eq116.7}),
which completes the proof.
\end{proof}

\vspace{0.3cm}

{\bf\small Acknowledgements.} This research was in part supported by grants no.~90110029 (first author) and
no.~90110030 (second author) from the School of Mathematics, Institute for Research in Fundamental Sciences (IPM).
This work was done during our summer visit in 2010 to the Abdus Salam International Centre for Theoretical Physics (ICTP),
Trieste, Italy. The authors wish to thank the staff and, in particular,  the Head of the Mathematics Section of the ICTP,
Professor Ramadas Ramakrishnan,  for their hospitality and the  excellent working conditions. The first author is grateful to
the Commission on Development and Exchanges of the IMU for travel support.


\begin{thebibliography}{99}
\bibitem{abr}
M.~Abramovitz, I.~Stegun,  {\it Handbook of Mathematical Functions with Formulas, Graphs, and Mathematical Tables.} New York: Dover, 1964

\bibitem{ap}
R.~Ap\'ery, {\it Irrationalit\'e de $\zeta(2)$ et $\zeta(3),$}
Ast\'erisque {\bf 61} (1979), 11-13.


\bibitem{apt}
A.~I.~Aptekarev (ed.), {\it Rational approximation of Euler's constant and recurrence relations,}
Current Problems in Math., (2007), vol. 9, Steklov Math. Inst. RAN. (Russian)

\bibitem{at}
A.~I.~Aptekarev, D.~N.~Tulyakov, {\it Four-term recurrence relations for
$\gamma$-forms,} in [4], p.~37--43.



\bibitem{be}
F.~Beukers, {\it A note on the irrationality of $\zeta(2)$ and $\zeta(3)$,}
Bull. London Math. Soc. {\bf 11} (1979), no.~3, 268--272.

\bibitem{be1}
F.~Beukers, {\it Pad\'e approximations in number theory,} in Lecture Notes in Math.
{\bf 888} Springer-Verlag, Berlin, 1981, 90--99.

\bibitem{buch}
H.~Buchholz, {\it The Confluent Hypergeometric Function.} Springer-Verlag, New York Inc. 1969.
Springer Tracts in Natural Philosophy, Vol.~15.

\bibitem{fi}
S.~Fischler, {\it Irrationalit\'e de valeurs de z\^eta [d'apre\`s Ap\'ery,
Rivoal, $\dots$],} Ast\'erisque No. 294 (2004), vii, 27--62.

\bibitem{gu}
L.~A.~Gutnik, {\it On the irrationality of certain quantities
involving $\zeta(3),$} Russian Math. Surveys {\bf 34} (1979), no.3, 190;
Acta Arith. {\bf 42} (1983), no.~3, 255--264.


\bibitem{he}
Kh.~Hessami Pilehrood, T.~Hessami Pilehrood, {\it Rational approximations for values
of the digamma function and a  conjecture on denominators,}
Math.~Notes {\bf 90} (2011), no.~5, 730--747.

\bibitem{he1}
Kh.~Hessami Pilehrood, T.~Hessami Pilehrood, {\it Rational approximations
for the quotient of  gamma values,} Indag.~Math.~(N.S.) {\bf 20} (2009), no.~4, 583--601.




\bibitem{jt}
W.~B.~Jones, W.~J.~Thron, {\it Continued Fractions: Analytic Theory and Applications,}
Encyclopedia of Math. and its Appl., V.11, (G.-C. Rota, ed.), Addison-Wesley, 1980.

\bibitem{lu}
Yu.~L.~Luke, {\it The special functions and their approximations,} Vol.~1, Academic Press, 1969.

\bibitem{Mah}
K.~Mahler, {\it Applications of a theoerem by A.~B.~Shidlovski,} Philos. Trans. Roy. Soc. London Ser.~A
{\bf 305} (1968), 149--173.

\bibitem{ne}
Yu.~V.~Nesterenko, {\it A few remarks on $\zeta(3),$} Math. Notes {\bf 59} (1996),
no.6, 625--636.

\bibitem{perron}
O.~Perron, {\it \"{U}ber das Verhalten einer ausgearteten hypergeometrischen Reihe bei unbegrenztem
Wachstum eines Parameters,} J.~Reine Angew.~Math. {\bf 151} (1921), 63--78.

\bibitem{PWZ}
M.~Petkov\v{s}ek, H.~S.~Wilf, and D.~Zeilberger, {\it $A=B,$} A. K. Peters, Ltd., Wellesley, M. A., 1997.


\bibitem{po}
A.~van der Poorten, {\it A proof that Euler missed\ldots Ap\'ery's proof of the irrationality
of $\zeta(3),$} Math. Intelligencer {\bf 1} (1978/79), no.~4, 195--203.



\bibitem{ri}
T.~Rivoal, {\it La fonction z\^eta de Riemann prend une infinit\'e
de valeurs irrationnelles aux entiers impairs,} C. R. Acad. Sci. Paris
S\'er. I Math. {\bf 331} (2000), no.~4, 267--270.



\bibitem{ri1}
T.~Rivoal, {\it Rational approximations for values of derivatives of the Gamma function,}
Trans. Amer. Math. Soc. {\bf 361} (2009), 6115--6149.


\bibitem{shidlovski}
A.~B.~Shidlovskii, {\it Transcendental Numbers.} Berlin, New York: de Gruyter, 1989.

\bibitem{slater}
L.~J.~Slater, {\it Confluent Hypergeometric Functions.} Cambridge University Press, 1960.

\bibitem{sondow}
J.~Sondow, {\it Criteria for irrationality of Euler's constant,}
Proc. Amer. Math. Soc. {\bf 131} (2003), 3335--3344.


\bibitem{szego}
G.~Szeg\H{o}, {\it Orthogonal Polynomials,} (4th ed.)  Amer. Math. Soc. Colloq. Publ. 23,
Amer. Math. Soc., Providence, RI, 1975.

\bibitem{tu}
D.~N.~Tulyakov, {\it A system of recurrence relations for rational approximations of the Euler constant,}
Math. Notes {\bf 85} (2009), no.~5, 746--750.


\bibitem{wall}
H.~S.~Wall, {Analytic Theory of Continued Fractions,} Chelsea Pub. Company, N.~Y., 1948.




\bibitem{zu}
W.~Zudilin, {\it One of the numbers $\zeta(5), \zeta(7), \zeta(9), \zeta(11)$
is irrational,} Russian Math. Surveys {\bf 56} (2001), no.~4, 149--150.






\end{thebibliography}
\end{document}